\newtheorem{theorem}{Theorem}[section]
\newtheorem{lemma}[theorem]{Lemma}
\newtheorem{corollary}[theorem]{Corollary}
\newtheorem{claim}{Claim}
\newtheorem{question}[theorem]{Question}
\theoremstyle{definition}
\theoremstyle{remark}
\numberwithin{equation}{section}
\begin{document}

 \title[Siegel's theorem and the Jacobian conjecture over the rational field]{Siegel's theorem on integral points\\ and\\ the Jacobian conjecture over the rational field}

\author{Nguyen Van Chau}
\address{Institute of Mathematics, Vietnam Academy of Science and Technology, 18 Hoang Quoc Viet, 10307 Hanoi, Vietnam.}

\email{nvchau@math.ac.vn}
\thanks{The author was partially supported by Vietnam National Foundation for Science and Technology
Development (NAFOSTED) grant 101.04-2014.23, and VIASM.}

\subjclass[2010]{14R15; 11G05 }
\date{}

\maketitle

\begin{abstract}   It is shown  that  a polynomial map $(P,Q)\in \mathbb{Q}[x,y]^2$
 with $P_xQ_y-P_yQ_x \equiv 1$  has an  inverse map in $\mathbb{Q}[x,y]^2$  if  the  fiber  $P=0$ contains an infinite subset of  $ d^{-1}\mathbb{Z}^2$ for an integer $d$.
  \end{abstract}

\section{Introduction} A polynomial map $F\in \mathbb{C}[X]^n$, $X=(X_1,X_2,\dots, X_n)$, is a {\it Keller map} if it satisfies the Jacobian condition $\det DF\equiv 1$. The mysterious Jacobian conjecture, firstly posed by Ott-Heinrich Keller  \cite{Keller} since 1939 and still opened,  asserts that every Keller  map $F\in \mathbb{C}[X]^n$  has an inverse map in $\mathbb{C}[X]^n$ (see  \cite{EssenBook} and  \cite{Bass}). This  paper is to present a simple application of Siegel's theorem on integral points on affine curves to this conjecture over the rational field.

 Recall that a subset of $\mathbb{Q}^n$ is  {\it quasi-integral}  if  it is contained in  $d^{-1} \mathbb{Z}^n$ for an integer $d$. Obviously, if a Keller map $F=(F_1,F_2,\dots, F_n)\in \mathbb{Q}[X]^n$ has an inverse $G\in \mathbb{Q}[X]^n$,  for each $\alpha=(\alpha_1,\alpha_2,\dots,\alpha_{n-1})\in \mathbb{Q}^{n-1}$, the image $G(\{\alpha\}\times \mathbb{Z})$  is an infinite quasi-integral set contained in  the affine curve  defined by $F_i=\alpha_i$, $i=1,2,\dots, n-1$, where $d$ is  the common denominator of all $\alpha_i$ and coefficients in $G$.

Our main result here is the following.

  \begin{theorem}[Main Theorem]\label{Mainthm}  Let  $(P,Q)\in \mathbb{Q}[x,y]^2$ be a Keller map. If the fiber   $P=0$  contains an infinite quasi-integral subset of  $\mathbb{Q}^2$, then $(P,Q)$ has an inverse map  in $ \mathbb{Q}[x,y]^2$.
\end{theorem}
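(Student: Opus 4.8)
The plan is to combine the smoothness forced by the Jacobian identity with Siegel's theorem and the Abhyankar--Moh--Suzuki embedding theorem. First I would note that $P_xQ_y-P_yQ_x\equiv 1$ makes $dP$ nowhere zero on $\mathbb{A}^2$, so $C:=\{P=0\}$ is a smooth (hence reduced) affine plane curve over $\mathbb{Q}$, a disjoint union of smooth irreducible curves. The hypothesis gives infinitely many points of $C$ in $d^{-1}\mathbb{Z}^2=\mathbb{Z}[1/d]^2$, i.e.\ infinitely many $S$-integral points with $S=\{\infty\}\cup\{p:p\mid d\}$, and since $C$ has finitely many components one of them, say $C_0$, carries infinitely many such points. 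Applying Siegel's theorem (in the Siegel--Mahler--Lang form: a smooth affine curve over a number field with infinitely many $S$-integral points has geometric genus $0$ and at most two geometric points at infinity) to $C_0$, I get that $C_0$ has genus $0$ and at most two places at infinity; as $C_0$ has a rational point, its smooth projective model is $\mathbb{P}^1_{\mathbb{Q}}$.

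The decisive step is to upgrade this to $C=C_0\cong\mathbb{A}^1_{\mathbb{Q}}$. Because $dP\wedge dQ=dx\wedge dy$ is a nowhere-vanishing $2$-form, $dP$ and $dQ$ form a $\mathbb{Q}[x,y]$-basis of $\Omega^1_{\mathbb{Q}[x,y]}$, so $\Omega_{\mathbb{Q}[x,y]/\mathbb{Q}[P,Q]}=0$ and $\operatorname{Spec}\mathbb{Q}[x,y]\to\operatorname{Spec}\mathbb{Q}[P,Q]$ is unramified; restricting over the line $\{P=0\}$, the function $Q$ gives an unramified morphism $C\to\mathbb{A}^1$, and in particular an unramified $f\colon C_0\to\mathbb{A}^1$. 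Its extension $\bar f\colon\bar C_0\cong\mathbb{P}^1\to\mathbb{P}^1$ can ramify only at the points of $\bar C_0\setminus C_0$, so Riemann--Hurwitz, with $g(\bar C_0)=0$, leaves exactly two possibilities: either one place at infinity with $\deg\bar f=1$, so $C_0\cong\mathbb{A}^1_{\mathbb{Q}}$; or two places at infinity, $\bar f$ totally ramified over two distinct points, $C_0\cong\mathbb{G}_m$ and $f$ conjugate to $z\mapsto z^{n}$. I then need to exclude the second case and to rule out that $C$ has any further component. This is where the \emph{full} identity $P_xQ_y-P_yQ_x\equiv 1$ (not merely its restriction to $C$) must enter: I would record that $P$ and $Q$ are primitive (if $P=h\circ R$ with $\deg h\ge 2$ then $h'(R)$ would be a unit of $\mathbb{Q}[x,y]$, forcing $h$ linear) and that the top-degree forms satisfy $\hat P_x\hat Q_y-\hat P_y\hat Q_x=0$ (the case $\deg P\le 1$ being immediate), so $\hat P$ and $\hat Q$ are proportional to powers $H^{a}$, $H^{b}$ of a common homogeneous form $H$; then, parametrizing a putative $\mathbb{G}_m$-component — or a second component — by Laurent polynomials and evaluating the identity at the points at infinity, I would derive Newton-polygon and degree constraints that are incompatible with $Q$ being a polynomial. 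This gives $C=C_0\cong\mathbb{A}^1_{\mathbb{Q}}$.

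Finally, $C\cong\mathbb{A}^1_{\mathbb{Q}}$ is a closed subcurve of $\mathbb{A}^2_{\mathbb{Q}}$, so by the Abhyankar--Moh--Suzuki theorem (valid over any field of characteristic zero) there is $\varphi\in\operatorname{Aut}_{\mathbb{Q}}(\mathbb{A}^2)$ with $\varphi(C)=\{x=0\}$. Since $C$ is reduced and a nonconstant polynomial over $\mathbb{C}$ has nonempty zero locus, $P\circ\varphi^{-1}$ vanishes exactly to order one on $\{x=0\}$ and nowhere else, so $P\circ\varphi^{-1}=cx$ with $c\in\mathbb{Q}^{*}$; thus $P$ is a coordinate of $\mathbb{Q}[x,y]$. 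Composing with $\varphi^{-1}$ and rescaling reduces to $P=x$, and then $P_xQ_y-P_yQ_x=Q_y\equiv 1$ forces $Q=y+g(x)$ with $g\in\mathbb{Q}[x]$, while $(x,\,y+g(x))$ has inverse $(x,\,y-g(x))$ over $\mathbb{Q}$; undoing the changes of coordinates yields an inverse of $(P,Q)$ in $\mathbb{Q}[x,y]^{2}$.

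I expect the main obstacle to be the exclusion of the two-places-at-infinity ($\mathbb{G}_m$) case and of reducibility of $C$: Riemann--Hurwitz alone does not settle this, since $z\mapsto z^{n}$ is unramified over $\mathbb{A}^1$, and one genuinely needs the global Jacobian identity together with the behaviour of $P$ at infinity. A secondary point to watch is that the Abhyankar--Moh--Suzuki rectification is invoked over $\mathbb{Q}$, not merely over $\mathbb{C}$.
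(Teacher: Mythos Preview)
Your overall architecture (Siegel $\Rightarrow$ genus $0$ with at most two places at infinity $\Rightarrow$ rule out $\mathbb{G}_m$ $\Rightarrow$ Abhyankar--Moh--Suzuki $\Rightarrow$ done) matches the paper, but the crucial middle step is handled very differently, and in your version it is a genuine gap.

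You try to exclude the $\mathbb{G}_m$ possibility (and extra components) using only the Jacobian identity and ``Newton-polygon and degree constraints''. But showing that a fiber of $P$ in a complex Keller map $(P,Q)$ can have no component diffeomorphic to $\mathbb{C}^*$ is, as the paper itself remarks in its closing comments, an open problem; proving it would extend the Main Theorem to all number fields. Your Riemann--Hurwitz argument correctly shows that the unramified map $Q_{|C_0}$ is either an isomorphism $\mathbb{A}^1\to\mathbb{A}^1$ or a covering $\mathbb{G}_m\to\mathbb{G}_m$, but you cannot separate these two cases from the Jacobian relation alone, and the sketch you give (leading forms, Laurent parametrizations) does not constitute a proof. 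Likewise your attempt to show $C=C_0$ is both unproved and unnecessary.

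The paper's key idea, which you are missing, is to use the arithmetic hypothesis a \emph{second} time, beyond Siegel. By Silverman's refinement, integral points on $C_0$ accumulate along \emph{every} branch at infinity; since $Q$ (after clearing denominators) takes integer values at these points and a bounded set of integers is finite, $Q$ must tend to $\infty$ along each branch. Hence $Q_{|C_0}:C_0\to\mathbb{C}$ is proper, and a proper unramified map to the simply connected $\mathbb{C}$ is an isomorphism, forcing $C_0\cong\mathbb{C}$ and killing the $\mathbb{G}_m$ case. For the endgame the paper does not need irreducibility of $C$: once a single component is a line after rectification, Gwo\'zdziewicz's theorem (a Keller map injective on a line is an automorphism) finishes, whereas your argument that $P\circ\varphi^{-1}=cx$ tacitly assumes $C$ has no other components.
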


An important and  immediate consequence of Theorem \ref{Mainthm} with together the formal inverse function theorem is the following.

 \begin{theorem}\label{Thm2}  Every Keller map  $(P,Q)\in \mathbb{Z}[x,y]^2$ has an inverse map  in $ \mathbb{Z}[x,y]^2$ if,  for an $\alpha\in \mathbb{Q}$,  the fiber   $P=\alpha$  contains an infinite quasi-integral subset of $\mathbb{Q}^2$.
 \end{theorem}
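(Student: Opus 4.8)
This statement is essentially a corollary of Theorem~\ref{Mainthm}, and the plan is to make precise the two routine reductions that connect them. For the first reduction, given a Keller map $(P,Q)\in\mathbb{Z}[x,y]^2$ and an $\alpha\in\mathbb{Q}$ for which the fiber $\{P=\alpha\}$ contains an infinite quasi-integral set $S\subset\mathbb{Q}^2$, I would pass to the map $(P-\alpha,Q)\in\mathbb{Q}[x,y]^2$. Its Jacobian is still identically $1$, so it is a Keller map over $\mathbb{Q}$, and its zero fiber $\{P-\alpha=0\}$ equals $\{P=\alpha\}$ and hence contains $S$. Theorem~\ref{Mainthm} then provides an inverse of $(P-\alpha,Q)$ in $\mathbb{Q}[x,y]^2$; since $(P,Q)$ and $(P-\alpha,Q)$ differ by a translation of the target, $(P,Q)$ likewise admits an inverse $G=(G_1,G_2)\in\mathbb{Q}[x,y]^2$.

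The second reduction upgrades the coefficients of $G$ from $\mathbb{Q}$ to $\mathbb{Z}$, and this is where the formal inverse function theorem enters. Write $z=(x,y)$, set $v:=(P(0,0),Q(0,0))\in\mathbb{Z}^2$, and let $A:=D(P,Q)(0,0)$; since $A$ is an integer matrix with $\det A=1$, both $A$ and $A^{-1}$ lie in $GL_2(\mathbb{Z})$. The polynomial map $H(z):=A^{-1}\bigl((P,Q)(z)-v\bigr)$ then has integer coefficients and the normalized form $H(z)=z+(\text{terms of order}\ge 2)$. A formal power-series map of this shape over a commutative ring has a unique two-sided formal inverse of the same shape, whose coefficients are produced from those of $H$ by finitely many ring operations in each degree; hence the formal inverse $K$ of $H$ has coefficients in $\mathbb{Z}$. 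On the other hand, solving $w=H(z)$ for $z$ shows that the polynomial map $w\mapsto G(Aw+v)$, which has $\mathbb{Q}$-coefficients, is a genuine two-sided inverse of $H$, so by uniqueness it coincides with $K$. Therefore $G(Aw+v)\in\mathbb{Z}[x,y]^2$, and substituting the integral affine change of variables $w=A^{-1}(u-v)$ yields $G(u)\in\mathbb{Z}[x,y]^2$, completing the proof.

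I do not expect any real obstacle here beyond bookkeeping: the entire substance of the argument is Theorem~\ref{Mainthm}, which is assumed, while the shift of the fiber to level $0$ over $\mathbb{Q}$ and the descent of the rational inverse to $\mathbb{Z}$ by uniqueness of the formal inverse are standard. If one wishes to avoid expanding at the origin, the same descent works verbatim around any point of $\mathbb{Z}^2$; alternatively, one may argue one prime at a time that every coefficient of $G$ is $p$-integral, using that $A\bmod p\in GL_2(\mathbb{F}_p)$ for every $p$ because $\det A=1$.
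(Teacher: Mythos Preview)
Your argument is correct and follows exactly the route indicated in the paper, which states without further detail that Theorem~\ref{Thm2} is an immediate consequence of Theorem~\ref{Mainthm} together with the formal inverse function theorem. You have simply made explicit the two standard steps (the translation $P\mapsto P-\alpha$ to reduce to the zero fiber, and the descent of the inverse from $\mathbb{Q}$ to $\mathbb{Z}$ via uniqueness of the formal inverse and the fact that $D(P,Q)(0,0)\in GL_2(\mathbb{Z})$), and both are carried out correctly.
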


Let us denote $$C(S,H,\alpha):=\{(a,b)\in S\times S: H(a,b)=\alpha\}$$
 for $H\in \mathbb{Q}[x,y]$ , $S\subset  \mathbb{Q}$  and $\alpha\in \mathbb{Q}$. In view of Theorem \ref{Mainthm},   for any possible counterexample $(P,Q)\in \mathbb{Q}[x,y]^2$ to the Jacobian conjecture, if exists,  the inequality
  $$ \#C(d^{-1}\mathbb{Z}^2,P,\alpha)\ <+ \infty$$
 must holds true  for all $d\in \mathbb{N}$ and all $\alpha \in \mathbb{Q}$.

 Theorem \ref{Thm2} is a slight improvement of the main result in  \cite{Chau2006}, which says that Keller maps $(P,Q)\in\mathbb{Z}[x,y]^2$ with  fiber $P=0$ having infinitely many integral points  are automorphisms of $\mathbb{Z}^2$. This result is reduced from an interesting observation that  if  a Keller map $(P,Q)\in \mathbb{Z}[x,y]^2$ is not inverse, then  there is a constant $M>0$ depended only on $(P,Q)$  such that
 $\#C(\mathbb{Z},P,k)\leq M$ for all  $k\in \mathbb{Z}$
(Lemma 2,  \cite{Chau2006}).  Our approach here does not cover this result.

In studying  the Jacobian conjecture over the rational field $\mathbb{Q}$ it is worthy to consider the following questions for Keller maps $(P,Q)\in \mathbb{Q}[x,y]^2$:
\begin{question}\label{Q1}Is $(P,Q)$ inverse if  $\#C( \mathbb{Q},P,0)=+\infty$?\end{question}
\begin{question}\label{Q2}Is  uniformly bounded the numbers $\#C( \mathbb{Q},P,\alpha)$, $\alpha\in \mathbb{Q}$,  if $(P,Q)$ is not inverse?\end{question}

Under the Jacobian condition $\det D(P,Q)\equiv 1$,  the complex fibers of $P$ are nonsingular  curves and $P$ is a primitive polynomial in $\mathbb{C}[x,y]$. It is known that for all  except a finite number of  $\alpha\in \mathbb{C}$,  the fibers $P=\alpha$ are diffeomorphic to same a Riemann surface of genus $g_P$ and of $n_P$ punctures. In view of the celebrated Faltings theorem \cite{Faltings}  on rational points on algebraic curves,  if  $\#C( \mathbb{Q},P,0)=\infty$, the fiber $P=0$ must contain  a rational curve or an elliptic curve. Furthermore, if $g_P\geq 2$,  one has $\#C( \mathbb{Q},P,\alpha)<+\infty$ for all except a finite number of $\alpha\in \mathbb{Q}$.

The Uniform Bound Conjecture (see, for example, in  \cite{Caporaso}) says that  for every
integer $ g\geq 2$, there exists a natural number $B( \mathbb{Q}; g)$ such that
any algebraic curve  defined  over $ \mathbb{Q}$ and of genus $g$  cannot have more than $B(\mathbb{Q},g)$
 points in $ \mathbb{Q}^2$. If this conjecture is true and if Question \ref{Q1} is positive,
 then one has a confirmation to Question 1.4,  at least for the case $g_P\geq 2$.
A confirmation to Question \ref{Q2} will allow us to reduce the Jacobian problem
 over $\mathbb{Q}$  to the question {\it whether there is no Keller maps
$(P,Q)\in\mathbb{Q}[x,y]^2$ such that $\#C(\mathbb{Q},P,\alpha)$, $\alpha\in \mathbb{Q}$, are uniformly bounded.}

A proof of Theorem \ref{Mainthm} will be presented in the next section.  A version of this theorem  for high dimensions will be provided in the last section.

 \section{Proof of  Main Theorem}
 Let us begin with a brief introduction on the celebrated Siegel's theorem on integral points on affine curves.
Let $C$ be an irreducible affine curve in $\mathbb{C}^n$ defined by some polynomials in  $\mathbb{Q}[X]$ and $g_C$ denote the geometric genus of a desingularization of $C$.  Siegel's theorem \cite{Siegel}  asserts that {\it if  $g_C>0$  or  if $g_C=0$ and $C$ has more than two irreducible branches at infinity, then $C$ may have at most finitely many integral points}. We will  use the following version of  Siegel's theorem, concerning with affine curves having infinitely many integral points.

 \begin{theorem}\label{Siegelthm}  If  $C$ has infinitely many integral points, then
 \begin{enumerate}
 \item[i)] $C$ has genus zero and has no more than two irreducible branches at infinity, and
 \item[ii)] on each irreducible branch at infinity of $C$, there is a sequence of  integral points of $C$ tending to infinity.
 \end{enumerate}
 \end{theorem}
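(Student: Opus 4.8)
The plan is to deduce (i) at once from the quoted form of Siegel's theorem and to concentrate all of the work on (ii). Part (i) is just the contrapositive: Siegel forbids an infinite set of integral points as soon as $g_C>0$, or $g_C=0$ and $C$ has three or more irreducible branches at infinity, so, since the geometric genus is non-negative and an affine curve always has at least one branch at infinity, negating the hypothesis leaves exactly $g_C=0$ together with one or two branches at infinity.

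For (ii) I would first normalise the situation. Since $C$ (which we may take geometrically irreducible) has infinitely many integral, hence rational, points, it carries a rational point, so, being of genus $0$ by (i), its function field is $\mathbb{Q}(t)$; the restrictions $X_1,\dots,X_n$ to $C$ of the coordinate functions of $\mathbb{A}^n$ are then rational functions of $t$ over $\mathbb{Q}$ whose poles lie only among the one or two places at infinity $p_1$ (and $p_2$). If there is a single place at infinity it is $\mathbb{Q}$-rational; taking $t$ with $p_1=\infty$ makes each $X_i$ a polynomial in $t$, a Gauss-lemma argument confines the $t$-values of integral points to $D^{-1}\mathbb{Z}$ for a fixed $D$, so those values form an infinite discrete set, $|t|\to\infty$ along a subsequence, and the corresponding points of $C$ run to infinity along the unique branch. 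If there are two places at infinity, an infinite set of distinct integral points is discrete and unbounded in $\mathbb{R}^n$, so after passing to the smooth projective model a subsequence converges in $\mathbb{P}^1(\mathbb{C})$ to $p_1$ or to $p_2$: one branch is reached, and the issue is to reach the other.

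This is the heart of the matter. With two places at infinity $C$ is a $\mathbb{Q}$-form of $\mathbb{G}_m=\mathbb{P}^1\setminus\{0,\infty\}$ — either $\mathbb{G}_m$ itself (impossible here, as the estimates below then bound both the numerator and the denominator of the parameter of an integral point and leave only finitely many) or the norm-one torus of the quadratic field $K$ over which the conjugate pair $\{p_1,p_2\}$ is defined — and in either case the inversion automorphism $\iota$, interchanging $p_1$ and $p_2$, is defined over $\mathbb{Q}$. Fix a coordinate $u$ on $C_{\bar{\mathbb{Q}}}\cong\mathbb{G}_m$ with $u(p_1)=0$ and $u(p_2)=\infty$; each $X_i$ is then a Laurent polynomial $\sum_j c_{ij}u^{j}$ over $\bar{\mathbb{Q}}$, and, $p_1$ and $p_2$ being genuine points at infinity of $C\subset\mathbb{A}^n$, some $X_i$ has a nonzero term of positive degree in $u$ and some a nonzero term of negative degree. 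Comparing $\mathfrak{p}$-adic valuations in these relations for primes $\mathfrak{p}$ outside the fixed finite set $S$ of primes dividing the $c_{ij}$ forces the parameter $\beta=u(P)$ of any integral point $P$ to be an $S$-unit, and the $X_i(\beta)$ stay bounded whenever $|\beta|$ is bounded away from $0$ and $\infty$, so an integral point far out in $\mathbb{R}^n$ has $|\beta|$ very small (limit $p_1$) or very large (limit $p_2$). Applying $\iota$ carries $P$ to a rational point $\iota(P)$ of $C$ with parameter $\beta^{-1}$, whose coordinates $\sum_j c_{ij}\beta^{-j}$ are again $S$-integral; hence the $\iota$-images of the integral points form an infinite set of quasi-integral points, and if the original points cluster only at $p_1$ (so $|\beta|\to 0$) their $\iota$-images have $|\beta^{-1}|\to\infty$ and therefore tend to infinity along the branch $p_2$. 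Together with the original sequence, this yields the required sequence on each branch.

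The one genuinely delicate point is this last transfer: $\iota$ need not preserve $\mathbb{Z}^n$, so a priori only a quasi-integral sequence is obtained on the second branch. Since the application of Theorem \ref{Siegelthm} in the proof of the Main Theorem only ever feeds in a quasi-integral set, nothing is lost by phrasing (ii) with ``quasi-integral'' in place of ``integral''; alternatively one may quote the classical strong form of Siegel's theorem, which already records that the integral points of a genus-zero curve with two places at infinity cluster at both of them. Everything else — the descent identifying $C$ with a form of $\mathbb{G}_m$ and the elementary valuation and Gauss-lemma estimates — is routine.
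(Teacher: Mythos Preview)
The paper does not prove Theorem~\ref{Siegelthm} at all. Immediately after stating it the paper writes that (i) ``is just Siegel's theorem stated in an equivalent statement'' and that (ii) ``is known later due to Silverman''; the closing remarks add that (ii) is also implicit in Poulakis--Voskos. So the paper's ``proof'' consists of two citations, and your sketch already does strictly more than the paper by attempting an argument.

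Your treatment of (i) is the same one-line contrapositive the paper records. For (ii), your case analysis is sound: with a single branch at infinity the place is $\mathbb{Q}$-rational, the coordinates become polynomials in a parameter $t$, and the Gauss-lemma bound on the denominator of $t$ forces the parameters of integral points into a fixed $D^{-1}\mathbb{Z}$, hence unbounded; with two branches you correctly identify $C$ with a $\mathbb{Q}$-form of $\mathbb{G}_m$, rule out the split form by bounding numerator and denominator of the parameter separately, and in the non-split (norm-one torus) case use that bounded parameter forces bounded coordinates to see which branch a far-out integral point approaches.

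The gap you flag yourself is genuine and you have located it precisely: the inversion $\iota$ is a $\mathbb{Q}$-morphism of $C$, so $\iota^{*}X_i$ is a polynomial in $X_1,\dots,X_n$ with \emph{rational} coefficients, whence $\iota$ carries integral points only to a fixed quasi-integral set, not to $\mathbb{Z}^n$. Thus your argument, as written, proves the quasi-integral version of (ii). Your two proposed remedies are both exactly right. First, the quasi-integral version is all the paper ever uses: the proof of Corollary~\ref{Cor1} rescales a quasi-integral set to an integral one before invoking (ii), and the subsequent ``bounded subset of $\mathbb{Z}$ is finite'' step works verbatim for $D^{-1}\mathbb{Z}$. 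Second, the full-strength statement with honest integral points on every branch is precisely Silverman's distribution theorem --- which is the reference the paper itself supplies in lieu of proof. So your write-up is a correct, essentially self-contained replacement for the paper's citation at the level of generality the paper actually needs, and it correctly defers to Silverman for the last sharpening.
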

 Property (i) is just Siegel's theorem stated in an equivalent statement. Property (ii) is known later due to  Silverman  \cite{Silverman}. This property ensures that, in some sense,  the behavior at infinity of a regular function on  such curve $C$ is completely reflected on its restriction on  the set of integral points of $C$. The consequence below may be well-known for experts and  appear somewhere.

 \begin{corollary}\label{Cor1} Let  $C$ be an irreducible affine curve in $ \mathbb{C}^n$,  defined over $\mathbb{Q}$.  Assume that $C$ has an infinite quasi-integral subset. Then, for any $H\in \mathbb{Q}[X]$, the restriction $H_{|C}:C\longrightarrow \mathbb{C}$ of $H$ on $C$ is either a constant function or a proper function. In particular, if $C$ is smooth and $H_{|C}$ has no singularities, then $H_{|C}$ is an isomorphism of $C$ and $\mathbb{C}$.
\end{corollary}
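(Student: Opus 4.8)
The plan is to reduce to the case of honest integral points, pass to the smooth projective model of the normalisation of $C$, and then read off the behaviour of $H$ from the location of its poles at infinity; Silverman's refinement (ii) of Siegel's theorem is the ingredient that makes this work.

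First I would clear the denominator. If $S\subseteq C\cap d^{-1}\mathbb{Z}^n$ is infinite, the homothety $\phi\colon X\mapsto dX$ is an isomorphism defined over $\mathbb{Q}$, the image $\phi(C)$ is again an irreducible affine curve over $\mathbb{Q}$ with $\phi(S)\subseteq\phi(C)\cap\mathbb{Z}^n$ infinite, and $H_{|C}=\bigl(H(X/d)\bigr)_{|\phi(C)}\circ\phi_{|C}$ with $H(X/d)\in\mathbb{Q}[X]$ and $\phi_{|C}$ an isomorphism; so, replacing $C$ by $\phi(C)$ and $H$ by $H(X/d)$, I may assume that $C$ itself has infinitely many integral points. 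Let $\nu\colon\widetilde C\to C$ be the normalisation and $\overline C\supseteq\widetilde C$ its smooth projective model. By Theorem \ref{Siegelthm}(i), $\overline C$ has genus $0$, so $\overline C\cong\mathbb{P}^1$, and $\overline C\setminus\widetilde C=\{p_1,\dots,p_s\}$ with $1\le s\le2$ (each $p_i$ is a branch at infinity, and an affine curve has at least one). Since $\nu$ is finite and surjective, $H_{|C}$ is constant, resp.\ proper, if and only if its pull-back $h:=H_{|C}\circ\nu$ to $\widetilde C$ is; and $h$, being regular on the affine curve $\widetilde C$, extends to a rational function $\bar h$ on $\overline C=\mathbb{P}^1$ all of whose poles lie in $\{p_1,\dots,p_s\}$.

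Now suppose $H_{|C}$ is not constant; I claim $\bar h$ has a pole at every $p_i$. If not, $\bar h$ is regular at some $p_i$, with a finite value $c_i$. By Theorem \ref{Siegelthm}(ii) there is a sequence $\widetilde q_k$ of points of $\widetilde C$, each lying over an integral point $q_k=\nu(\widetilde q_k)$ of $C$, with $\widetilde q_k\to p_i$ in $\overline C$; then $H(q_k)=h(\widetilde q_k)=\bar h(\widetilde q_k)\to\bar h(p_i)=c_i$. Writing $H=D^{-1}\widetilde H$ with $\widetilde H\in\mathbb{Z}[X]$ and $D\in\mathbb{Z}_{>0}$, every $H(q_k)$ lies in the discrete set $D^{-1}\mathbb{Z}$; a convergent sequence in $D^{-1}\mathbb{Z}$ is eventually constant, so $c_i\in D^{-1}\mathbb{Z}$ and $H(q_k)=c_i$ for all large $k$. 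Then $h-c_i$ vanishes at infinitely many points of the irreducible affine curve $\widetilde C$, hence $h\equiv c_i$, contradicting non-constancy. Therefore $\bar h^{-1}(\infty)=\{p_1,\dots,p_s\}$, and since any sequence leaving every compact subset of $\widetilde C$ has a subsequence converging in $\overline C$ to some $p_i$ — along which $\bar h\to\infty$ — the function $h$, and hence $H_{|C}$, is proper. This gives the dichotomy.

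For the last assertion, smoothness of $C$ gives $C=\widetilde C=\mathbb{P}^1\setminus\{p_1,\dots,p_s\}$, and the hypothesis that $H_{|C}$ has no singularities means precisely that $\bar h\colon\mathbb{P}^1\to\mathbb{P}^1$ is unramified over $\mathbb{C}$, in particular non-constant; by the dichotomy $\bar h^{-1}(\infty)=\{p_1,\dots,p_s\}$, so all ramification of $\bar h$ lies over $\infty$. The Riemann--Hurwitz formula $-2=-2\deg\bar h+\sum_{i=1}^{s}(e_{p_i}-1)$ together with $\sum_{i=1}^{s}e_{p_i}=\deg\bar h$ forces $\deg\bar h=2-s$; since $s\ge1$ and $\deg\bar h\ge1$, necessarily $s=1$ and $\deg\bar h=1$, so $\bar h$ is an automorphism of $\mathbb{P}^1$ carrying $p_1$ to $\infty$, whence $H_{|C}\colon C\to\mathbb{C}$ is an isomorphism. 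The crux is the claim of the previous paragraph: that Silverman's property (ii) forbids $H$ from staying bounded along a branch at infinity unless $H_{|C}$ is globally constant — together with the passage between $C$, $\widetilde C$ and $\overline C$. Once these are in place, the Riemann--Hurwitz count and the discreteness of $D^{-1}\mathbb{Z}$ are routine.
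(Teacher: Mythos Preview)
Your argument is correct and the core idea---reduce to honest integral points, then use Silverman's property (ii) together with the discreteness of $D^{-1}\mathbb{Z}$ to force $H$ to blow up along every branch at infinity unless it is globally constant---is exactly the paper's. The one substantive difference is in the ``in particular'' clause: you run a Riemann--Hurwitz count on $\bar h\colon\mathbb{P}^1\to\mathbb{P}^1$ to pin down $s=1$ and $\deg\bar h=1$, whereas the paper simply observes that a proper map $C\to\mathbb{C}$ without singularities is an unramified covering and invokes the simple connectedness of $\mathbb{C}$. Both routes are standard; the paper's is a line shorter, while yours delivers the conclusion as an algebraic isomorphism without the extra step from a topological covering to a morphism of varieties.
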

\begin{proof}  Let $H\in\mathbb{Q}[X]$ be fixed.  By assumptions, there is a number  $d\in \mathbb{N}$ such that the intersection $C\cap (d^{-1}\mathbb{Z}^n)$ is infinite and $H\in  d^{-1}\mathbb{Z}[X]$. So, by changing variables $X\mapsto dX$ and $H\mapsto dH$, we can assume that $C$ has infinitely many integral points and $H\in \mathbb{Z}[X]$.

 First, assume that $H$ is not constant on $C$. We will prove that the restriction $H_{|C}:C\longrightarrow \mathbb{C}$ is proper. Observe that by Property (ii) in Theorem \ref{Siegelthm} it suffices to show that  for each sequence  of integral points $a_i\in C$ tending to $\infty$, the corresponding sequence  $H(a_i)$ must tend to $\infty$. To see it, assume the contrary that  $H$ is bounded on a subsequence of $a_i$s. Since any bounded subset of $\mathbb{Z}$ is finite,  $H$  must be a constant on an infinite subset of $\{a_i\}$. This implies that $H$ is constant on $C$ - a contradiction. Hence, $H_{|C}$ is proper.

Now, assume that $C$ is smooth and $H_{|C}$ has no singularities. Since $H_{|C}$ is proper,  $H_{|C}:C\longrightarrow \mathbb{C}$   determines a unramified  covering of $\mathbb{C}$. Thus, by the simple connectedness of $\mathbb{C}$, $H_{|C}$ is isomorphic.
\end{proof}

\begin{lemma}\label{C-curve} Let $(P,Q)\in \mathbb{C}[x,y]^2$ be a Keller map. If the fiber $P=0$ has a component diffeomorphic to  $\mathbb{C}$, then $(P,Q)$  is inverse.\end{lemma}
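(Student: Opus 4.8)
The plan is to reduce, via the Abhyankar--Moh--Suzuki embedding theorem, to the case where the given component of $\{P=0\}$ is a coordinate axis, and then to show that the Jacobian condition forces $P$ to be that coordinate up to a constant, so that $(P,Q)$ is manifestly invertible. \emph{Step 1 (normalisation).} Since $P_xQ_y-P_yQ_x\equiv1$, the polynomial $P$ has no critical point, so $\{P=0\}$ is a smooth curve and its components are pairwise disjoint smooth affine curves; the given component $D$, being diffeomorphic to $\mathbb{C}\cong\mathbb{R}^2$, is rational with a single place at infinity, hence $D\cong\mathbb{C}$ as an algebraic curve, and it occurs in the reduced scheme $\{P=0\}$ with multiplicity one. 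By Abhyankar--Moh--Suzuki there is $\sigma\in\operatorname{Aut}(\mathbb{C}^2)$ with $\sigma(D)=\{x=0\}$; composing $(P,Q)$ with $\sigma^{-1}$ and rescaling one coordinate --- operations preserving both the Keller condition and the conclusion --- we may assume $D=\{x=0\}$. Then $x\mid P$ while $x^2\nmid P$ (otherwise every point of $\{x=0\}$ would be critical for $P$), so $P=xP_1$ with $x\nmid P_1$, and $\{P_1=0\}$ is disjoint from $D$.

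\emph{Step 2 (restriction to $D$).} Restricting $P_xQ_y-P_yQ_x\equiv1$ to $x=0$, where $P_x=P_1(0,y)$ and $P_y=0$, gives $P_1(0,y)\,Q_y(0,y)\equiv1$; hence $P_1(0,y)$ is a nowhere-vanishing one-variable polynomial, so a nonzero constant $c_0$, and $Q(0,y)=c_0^{-1}y+\mathrm{const}$. Thus $P_1=c_0+xS$ for some $S\in\mathbb{C}[x,y]$, and $(P,Q)$ carries $D$ isomorphically onto the line $\{u=0\}$ of the target. (Conceptually this is the analogue over $\mathbb{C}$ of the ``in particular'' clause of Corollary~\ref{Cor1}: along $D$ one has $\ker dP=TD$, and $P_xQ_y-P_yQ_x\equiv1$ makes $dQ$ non-zero on $\ker dP$, so $Q|_D\colon D\to\mathbb{C}$ is \'etale, hence an isomorphism as $D\cong\mathbb{C}$.)

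\emph{Step 3 (the crux), and the conclusion.} It remains to prove $S=0$, i.e.\ $P=c_0x$; for then $P_xQ_y-P_yQ_x\equiv1$ reads $c_0Q_y\equiv1$, so $Q=c_0^{-1}y+\varphi(x)$ with $\varphi\in\mathbb{C}[x]$, and $(x,y)\mapsto(c_0x,\ c_0^{-1}y+\varphi(x))$ is an automorphism of $\mathbb{C}^2$ with polynomial inverse, whence (undoing Step~1) $(P,Q)$ is invertible. To prove $S=0$ I would first note that $\mathbb{C}(P)$ is algebraically closed in $\mathbb{C}(x,y)$ --- otherwise $P=R\circ\widetilde P$ with $\deg R\ge2$, and $P_xQ_y-P_yQ_x=R'(\widetilde P)\,(\widetilde P_xQ_y-\widetilde P_yQ_x)$ would exhibit the non-constant polynomial $R'(\widetilde P)$ as a unit, impossible --- so the general fibre of $P$ is irreducible. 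Normalising $c_0=1$ and, by an affine change in the target, $Q=y+xR$, the identity $P_xQ_y-P_yQ_x\equiv1$ for $P=x+x^2S$ becomes
\[
(R_y+2S)+x\,(2SR_y+S_x-S_yR)+x^2\,(S_xR_y-S_yR_x)=0 ,
\]
from which one extracts $S=0$ by a degree analysis at infinity: the leading forms $P^{(\deg P)}$ and $Q^{(\deg Q)}$ have vanishing Jacobian (the top-degree part of $dP\wedge dQ=dx\wedge dy$ must vanish unless $(P,Q)$ is affine) and are therefore proportional to powers of a single common form; combined with $x^2\mid P^{(\deg P)}$ (coming from $P=xP_1$) this forces both leading forms to be monomials in $x,y$, and feeding this back into the displayed relation is incompatible with $S\neq0$. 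An equivalent route is to deduce from the irreducibility of the general fibre together with the monomial leading forms that this fibre is $\cong\mathbb{A}^1$, and then invoke that a polynomial in two variables whose general fibre is $\mathbb{A}^1$ is a coordinate; once $P$ is a coordinate, writing $\mathbb{C}[x,y]=\mathbb{C}[P,T]$ and $Q=\Phi(P,T)$ makes $\partial\Phi/\partial T$ a unit, so $(P,Q)$ is a composite of automorphisms.

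The main obstacle is exactly Step~3: upgrading ``some fibre of $P$ contains a copy of $\mathbb{C}$'' to ``$P$ is a coordinate.'' This cannot be deduced from $P_xQ_y-P_yQ_x\equiv1$ alone, for that would settle the two-dimensional Jacobian conjecture; the hypothesis on $D$ (equivalently, after Step~1, the divisibility $x\mid P$) must be used essentially, and the delicate part is to rule out that $P=xP_1$ has further fibre components at $P=0$, i.e.\ to control the leading forms of $P$ and $Q$ under the Keller relation. A purely topological substitute --- computing Euler characteristics for the submersion $P\colon\mathbb{C}^2\to\mathbb{C}$ --- does not suffice: it fails to exclude, for instance, $P=x^2y-x$, for which $P^{-1}(0)=\{x=0\}\sqcup\{xy=1\}\cong\mathbb{C}\sqcup\mathbb{C}^{\ast}$ carries a $\mathbb{C}$-component yet which admits no Keller partner, so the Keller identity itself has to enter the argument.
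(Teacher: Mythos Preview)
Your Steps 1 and 2 are correct and in fact reproduce exactly what the paper does: use Abhyankar--Moh--Suzuki to straighten the component $D$ to the line $\{x=0\}$, and then use the Jacobian identity restricted to $D$ to see that $Q|_D$ is an isomorphism onto $\mathbb{C}$, so that $(P,Q)|_D$ is injective. At this point you and the paper are in identical positions.

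The divergence is Step 3, and here there is a genuine gap. You try to prove directly that $P=c_0x$, i.e.\ that $P$ is a coordinate, via a leading-form analysis. But the inference ``$x^2\mid P_+$ together with $P_+^{\,e}=cQ_+^{\,d}$ forces $P_+$ and $Q_+$ to be monomials'' is not valid: nothing rules out, say, $P_+=x^2(x+y)$ with $Q_+$ a constant multiple of $P_+$ (and of course one cannot exhibit an explicit Keller pair realising this without settling the Jacobian conjecture, but that is precisely why the argument cannot proceed by example-chasing). The subsequent claim that monomial leading forms plus irreducible generic fibre give $\mathbb{A}^1$-fibres is also unjustified: a single place at infinity does not by itself force genus zero. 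You yourself flag Step 3 as ``the main obstacle,'' and it really is one: what you are attempting is essentially a bare-hands proof of Gwo\'zdziewicz's theorem.

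The paper sidesteps all of this by \emph{citing} that theorem (Gwo\'zdziewicz, \emph{Bull.\ Soc.\ Sci.\ \L\'od\'z} \textbf{7} (1993), 59--60; also Theorem~10.2.31 in van den Essen's book): a Keller map of $\mathbb{C}^2$ whose restriction to a line is injective is an automorphism. Since your Steps 1--2 already show $(P,Q)$ is injective on the line $\{x=0\}$, this finishes the proof in one stroke. So the missing idea is not a new computation but the invocation of this injectivity-on-a-line criterion; once you have it, your Step 3 becomes unnecessary.
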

\begin{proof} Assume that $C$ is  a component of the fiber $P=0$, diffeomorphic to $\mathbb{C}$. As $J(P,Q)\equiv 1$,  the restriction $Q_{|C}: C\longrightarrow \mathbb{C}$ gives a unramified  covering of $\mathbb{C}$, and hence, is bijective. It implies that the restriction $(P,Q)_{|C}=(0,Q_{|C})$ is injective.  By Abhyankar-Moh-Suzuki embedding theorem \cite{Abhyankar} , $C$ is a line in a suitable algebraic coordinate of $\mathbb{C}^2$. The invertibility of $(P,Q)$ now follows from a well-known result due to Gwrozdiewicz \cite{Gwo}, which asserts  that every Keller map of $\mathbb{C}^2$ is inverse if its restriction to a line is injective (see Theorem 1 in  \cite{Gwo},  Theorem 10.2.31 in \cite{EssenBook}).
\end{proof}

\begin{proof}[{\bf Proof of Theorem \ref{Mainthm}}] Let $(P,Q)\in \mathbb{Q}[x,y]^2$ be a given Keller map such that the fiber $P=0$ contains an infinite quasi-integral set of $\mathbb{Q}^2$. In view of Siegel's theorem, the fiber $P=0$ must contain an irreducible component $C$ of genus zero and at most two irreducible branches at infinity. Since $J(P,Q)\equiv 1$, $C$ is smooth and the restriction $Q_{|C}$ has no singularities. Therefore, by Corollary 2.2, the component $C$ is diffeomorphic to $\mathbb{C}$. Hence, by Lemma \ref{C-curve}, $(P,Q)$ has an inverse map in  $\mathbb{Q}[x,y]^2$.
\end{proof}

\section{High dimensional case}

Recall that a  value $c\in \mathbb{C}^m$ is  a {\it generic value} of a polynomial map $h:\mathbb{C}^n\longrightarrow\mathbb{C}^m$, $n\geq m$,  if  there is  an open neighborhood  $U$ of  $c$ such that  the restriction $h: h^{-1}(U)\longrightarrow U$  determines a locally trivial  fibration.  Let $E_h$ denote the  the complement of the set of all generic values of $h$.  By definitions, the restriction $h:\mathbb{C}^n\setminus h^{-1}(E_h)\longrightarrow\mathbb{C}^m\setminus E_h$
 determines a locally trivial fibration.  It is well-known that either $E_h$  is empty and $h$ is a trivial fibration or $E_h$ is an algebraic hypersurface of  $\mathbb{C}^m$  (for example, see  \cite{Varchenko}).

Our version of Theorem \ref{Mainthm} for high dimensional cases  can be stated as follows.
\begin{theorem}\label{Thm3} Let
$F=(F_1,F_2,\dots , F_n) \in \mathbb{Q}[X]^n$ be a Keller map. Assume that  there is  a generic value $\alpha\in \mathbb{Q}^{n-1}$  of the map $\hat F=(F_1,F_2,\dots,F_{n-1}): \mathbb{C}^n\longrightarrow \mathbb{C}^{n-1}$ such that  the fiber $\hat F=\alpha$ contains an infinite   quasi-integral set of $\mathbb{Q}^n$. Then, $F$ has an inverse map in $\mathbb{Q}[X]^n$.
\end{theorem}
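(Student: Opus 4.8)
The plan is to mimic the two-dimensional argument, replacing the role of the line fiber $P=0$ by a generic fiber $\hat F = \alpha$ of the projection $\hat F = (F_1,\dots,F_{n-1})$. First I would observe that under the Jacobian condition $\det DF \equiv 1$, the differential $D\hat F$ has rank $n-1$ everywhere, so every fiber of $\hat F$ is a smooth affine curve in $\mathbb{C}^n$; in particular the fiber $\hat F = \alpha$ is smooth, and since $\alpha$ is a generic value of $\hat F$ this fiber is connected (the fibration being locally trivial over a connected neighborhood, and one can reduce to the irreducible case by noting a generic fiber of a dominant map is irreducible, or simply pass to an irreducible component $C$ carrying the infinite quasi-integral set). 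So let $C$ be the irreducible component of $\hat F = \alpha$ containing the infinite quasi-integral subset of $\mathbb{Q}^n$.

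Next, I would apply Corollary \ref{Cor1} to the curve $C$ (which is defined over $\mathbb{Q}$, since $\alpha \in \mathbb{Q}^{n-1}$) together with the regular function $H = F_n \in \mathbb{Q}[X]$. The key point is that $(F_n)_{|C}$ has no singularities: at any point $p \in C$, the rows $dF_1,\dots,dF_{n-1}$ are independent and together with $dF_n$ they span the cotangent space (because $\det DF \equiv 1$), so $dF_n$ restricted to $T_pC = \bigcap_{i<n}\ker dF_i$ is nonzero. Hence $(F_n)_{|C}$ is non-constant and has no critical points, and Corollary \ref{Cor1} yields that $(F_n)_{|C} : C \to \mathbb{C}$ is an isomorphism. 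In particular $C$ is diffeomorphic to $\mathbb{C}$, and $F_{|C} = (\alpha, (F_n)_{|C})$ is injective with image the line $\{\alpha\}\times\mathbb{C}$.

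Finally, I would need to convert "$F$ is injective on a curve $C \cong \mathbb{C}$" into "$F$ is invertible over $\mathbb{Q}$." In dimension two this was exactly Lemma \ref{C-curve}, which relied on Abhyankar--Moh--Suzuki and Gwo\'zdziewicz's theorem. For $n > 2$ there is no clean analogue of either result, so I expect this to be the main obstacle. The most plausible route is: since $(F_n)_{|C}$ is an isomorphism, $F$ maps $C$ bijectively and properly onto the line $L = \{\alpha\}\times\mathbb{C}$ in the target; consider the restriction of $F$ over a neighborhood $U$ of $L$, or better, use that $\alpha$ is a generic value of $\hat F$ so that $\hat F : \mathbb{C}^n \setminus \hat F^{-1}(E_{\hat F}) \to \mathbb{C}^{n-1}\setminus E_{\hat F}$ is a locally trivial fibration with fibers smooth affine curves, on each of which $F_n$ restricts without critical points; one would want to argue fiberwise (via Corollary \ref{Cor1}, applied to those fibers that meet the integral locus, or via a monodromy/specialization argument from the generic fiber) that $F_n$ restricts to an isomorphism on \emph{every} fiber of $\hat F$ over the fibration base, making $F = (\hat F, F_n)$ a bijection over that base, and then show the excluded set $E_{\hat F}$ must in fact be empty — e.g.\ because a dominant polynomial map $\mathbb{C}^n \to \mathbb{C}^n$ that is generically injective and has Jacobian $1$ cannot drop a hypersurface, by a degree/Zariski's-main-theorem argument — so that $F$ is a bijective polynomial map of $\mathbb{C}^n$, hence a polynomial automorphism, and the automorphism and its inverse are defined over $\mathbb{Q}$ since $F$ is (by Galois-descent on the inverse, which is unique). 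The delicate steps are the fiberwise globalization of the "isomorphism on the generic fiber" statement and the emptiness of $E_{\hat F}$; these are where the real work, and the real risk, of the high-dimensional case lie.
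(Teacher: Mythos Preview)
Your outline is essentially correct and close to the paper's argument, with one simplification and one unnecessary complication.

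\textbf{The simplification.} Your direct use of Corollary~\ref{Cor1} with $H=F_n$ to conclude that $(F_n)_{|C}:C\to\mathbb{C}$ is an isomorphism, hence $C\cong\mathbb{C}$, is valid and short. The paper instead invokes only Siegel's theorem to deduce that the generic fiber is $\mathbb{C}$ or $\mathbb{C}^*$, and then proves a separate lemma whose part (b) rules out $\mathbb{C}^*$ via a Riemann--Hurwitz/dimension argument. The reason for this detour is explained in the paper's closing remarks: Corollary~\ref{Cor1} relies on bounded subsets of $\mathbb{Z}$ being finite, which fails for rings of integers of general number fields, whereas the $\mathbb{C}^*$-exclusion argument is field-independent. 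Over $\mathbb{Q}$ your shortcut suffices; the paper's approach buys the theorem over every number field.

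\textbf{The unnecessary complication.} You worry about proving $E_{\hat F}=\emptyset$ and about globalizing the fiberwise isomorphism. Neither is needed. Once the generic fiber of $\hat F$ is known to be $\mathbb{C}$, then for every $\lambda\in\mathbb{C}^{n-1}\setminus E_{\hat F}$ the fiber $C_\lambda$ is diffeomorphic to $\mathbb{C}$ and $(F_n)_{|C_\lambda}$ has no critical points, hence is an unramified self-cover of $\mathbb{C}$, hence an isomorphism; no integral-point input is required here. Thus $\#F^{-1}(a)=1$ for all $a$ in the Zariski-open set $\mathbb{C}^n\setminus(E_{\hat F}\times\mathbb{C})$. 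Now use only that $F$ is a local diffeomorphism: if $F(p)=F(q)$ with $p\neq q$, the images of disjoint coordinate neighborhoods of $p$ and $q$ overlap in an open set where every value has at least two preimages, contradicting generic injectivity. So $F$ is injective, and Ax--Grothendieck gives that $F$ is an automorphism of $\mathbb{C}^n$. That the inverse lies in $\mathbb{Q}[X]^n$ then follows from the formal inverse function theorem (or, as you say, Galois descent). This is exactly the paper's Lemma 3.2(a); there is no ``real risk'' at this step.
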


In view of Siegel's theorem, the assumption on the fiber $\hat F=\alpha$ ensures that the generic fiber of $\hat F$ is diffeomorphic to either $\mathbb{C}$ or $\mathbb{C}^*$. Theorem \ref{Thm3} is an immediate consequence of the formal inverse function theorem and the following lemma.

\begin{lemma} Let
$F=(F_1,F_2,\dots , F_n) \in \mathbb{C}[X]^n$ be a Keller map and  $\hat F:=(F_1,F_2,\dots,F_{n-1}): \mathbb{C}^n\longrightarrow \mathbb{C}^{n-1}$. Then,
\begin{enumerate}
\item[a)]  $F$ is inverse if the generic fiber of $\hat F$ is diffeomorphic to $\mathbb{C}$;
\item[b)] the generic fiber of $\hat F$ can never be diffeomorphic to $\mathbb{C}^*$.
\end{enumerate}

\end{lemma}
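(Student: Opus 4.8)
The plan is to reduce both statements to the two‑dimensional picture by restricting $F$ to a single generic fiber of $\hat F$ and then applying the same circle of ideas (Abhyankar--Moh--Suzuki, Gwoździewicz, and the topology of fibrations) that drove Lemma~\ref{C-curve} and Theorem~\ref{Mainthm}. Fix a generic value $c\in\mathbb{C}^{n-1}$ of $\hat F$ and let $C=\hat F^{-1}(c)$. Because $\det DF\equiv 1$ and $c$ is a generic value of $\hat F$, the fiber $C$ is a smooth algebraic curve, and the restriction $F_n|_C\colon C\longrightarrow\mathbb{C}$ has nowhere‑vanishing derivative: indeed the Jacobian condition forces $dF_1\wedge\cdots\wedge dF_{n-1}\wedge dF_n$ to be nonzero everywhere, so along $C$ (where $dF_1,\dots,dF_{n-1}$ are independent) the differential of $F_n|_C$ cannot vanish. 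Hence $F_n|_C$ is an unramified, and (since $C$ is affine and $F_n|_C$ is a non‑constant regular function, hence proper on each component) a \emph{finite} covering of $\mathbb{C}$.

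For part (a): if $C$ is diffeomorphic to $\mathbb{C}$ then it is simply connected, so the unramified covering $F_n|_C\colon C\to\mathbb{C}$ is an isomorphism; in particular $F|_C=(c,F_n|_C)$ is injective on $C$. By Abhyankar--Moh--Suzuki, $C$ is a coordinate line after a polynomial automorphism of the ambient $\mathbb{C}^n$ — more precisely, I would first cut down to a well‑chosen coordinate plane containing a parametrization of $C$, or invoke the generalization of AMS that an embedding of $\mathbb{C}$ in $\mathbb{C}^n$ is rectifiable (Jelonek/Kaliman), to arrange that $C=\{X_2=\cdots=X_n=0\}$. Then $F$ is a Keller map whose restriction to a line is injective, and the multidimensional version of Gwoździewicz's theorem (a Keller map of $\mathbb{C}^n$ injective on a line is an automorphism, cf.\ \cite{Gwo}, \cite{EssenBook}) gives that $F$ is invertible. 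The delicate point here is the reduction to a coordinate line: in dimension $2$ this is exactly AMS, but for $n\geq 3$ one must be careful that the embedded copy of $\mathbb{C}$ is actually a rectifiable embedded curve, which is where I expect to lean on the known rectification results for polynomial embeddings $\mathbb{C}\hookrightarrow\mathbb{C}^n$.

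For part (b): suppose for contradiction that the generic fiber $C$ of $\hat F$ is diffeomorphic to $\mathbb{C}^*$. As above, $F_n|_C\colon C\to\mathbb{C}$ is a finite unramified covering; but $\mathbb{C}^*$ has fundamental group $\mathbb{Z}$ while $\mathbb{C}$ is simply connected, so an unramified covering $\mathbb{C}^*\to\mathbb{C}$ is impossible unless $F_n|_C$ is constant — and it cannot be constant, since $(F_1,\dots,F_{n-1},F_n)$ is a \emph{dominant} (indeed bijective, formally invertible) map, so $F_n$ is non‑constant on the generic fiber of $\hat F$. This contradiction rules out $C\cong\mathbb{C}^*$. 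Equivalently, and perhaps cleaner to write up: the map $F\colon\mathbb{C}^n\to\mathbb{C}^n$ restricts, over the locus of generic values of $\hat F$, to a map of fibrations whose fiberwise component is $F_n|_C$; an étale proper self‑map argument shows the generic fiber must be simply connected, excluding $\mathbb{C}^*$.

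The main obstacle, as flagged above, is the passage in part (a) from ``$C$ is an embedded affine line in $\mathbb{C}^n$'' to ``$C$ is a coordinate line after a polynomial automorphism of $\mathbb{C}^n$,'' i.e.\ the higher‑dimensional substitute for AMS; once that rectification is in hand, both parts follow formally from the Jacobian condition and the topology of unramified coverings of $\mathbb{C}$ and $\mathbb{C}^*$, together with Gwoździewicz's injective‑on‑a‑line criterion. Finally, Theorem~\ref{Thm3} follows by combining this Lemma with Siegel's theorem (which, via Corollary~\ref{Cor1}, forces the relevant generic fiber to be diffeomorphic to $\mathbb{C}$ or $\mathbb{C}^*$, the latter being excluded by part (b)) and the formal inverse function theorem to descend the inverse to $\mathbb{Q}[X]^n$.
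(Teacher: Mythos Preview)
Your argument for part (b) has a genuine gap. You assert that $F_n|_C$ is a \emph{finite} unramified covering of $\mathbb{C}$ because ``$C$ is affine and $F_n|_C$ is a non-constant regular function, hence proper on each component.'' This implication is false: a non-constant regular function on a smooth affine curve need not be proper as a map to $\mathbb{C}$. The inclusion $\mathbb{C}^*\hookrightarrow\mathbb{C}$ is already a counterexample, and it is exactly the scenario you must rule out. Without properness, $F_n|_C$ is only a local diffeomorphism, not a covering of $\mathbb{C}$, so the $\pi_1$ argument collapses. The paper confronts precisely this difficulty: of the two branches at infinity of $C_\lambda\cong\mathbb{C}^*$, one may well tend to a finite value $b_\lambda\in\mathbb{C}$ under $F_n$. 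A Riemann--Hurwitz computation then shows $F^{-1}(\lambda,b_\lambda)=\emptyset$, so as $\lambda$ varies over generic values these missing points sweep out a hypersurface contained in $\{a:F^{-1}(a)=\emptyset\}$; but a Keller map cannot omit a hypersurface (if $H\circ F\equiv c$ then $DH(F)\cdot DF\equiv 0$, contradicting $\det DF\equiv 1$). That is the actual contradiction.

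For part (a) your route is also more fraught than necessary. The ``multidimensional version of Gwo\'zdziewicz's theorem'' you invoke is not, to my knowledge, an available result, and the rectification step you flag as delicate is indeed an extra burden. The paper bypasses both: once each generic fiber $C_\lambda\cong\mathbb{C}$ gives an isomorphism $f_\lambda:C_\lambda\to L_\lambda$, one has $\#F^{-1}(a)=1$ for all $a$ in the Zariski-open set $\mathbb{C}^n\setminus(E_{\hat F}\times\mathbb{C})$; combined with the local diffeomorphism property from $\det DF\equiv 1$, this forces $F$ to be injective everywhere, and then Ax--Grothendieck finishes. This degree-count plus Ax--Grothendieck argument is both shorter and avoids any appeal to embedding or injectivity-on-a-line theorems in dimension $n$.
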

\begin{proof}For $\lambda\in \mathbb{C}^{n-1}$, let us denote $C_\lambda:=\hat F^{-1}(\lambda)$, $L_\lambda:=\{\lambda\}\times \mathbb{C}$ and $f_\lambda:C_\lambda \longrightarrow L_\lambda\subset \mathbb{C}^n$ the restriction to $C_\lambda$ of $F$, $f_\lambda(a)=(\lambda, F_n(a))$,  $a\in C_\lambda$. Since $JF\equiv 1$, the fibers $C_\lambda$ are smooth and  the maps  $f_\lambda$ have no singularities.  By definitions, the map
$\hat F:\mathbb{C}^n\setminus \hat F^{-1}(E_{\hat F})\longrightarrow\mathbb{C}^{n-1}\setminus E_{\hat F}$
 is a locally trivial fibration. So, the fibres $C_\lambda$,  $\lambda\in  \mathbb{C}^{n-1}\setminus E_{\hat F}$, are nonsingular irreducible affine curves of  same a topological type.

a) Assume that  the generic fiber of  $\hat F$ is diffeomorphic to the line $\mathbb{C}$. Then, for every $\lambda\in  \mathbb{C}^{n-1}\setminus E_{\hat F}$, the  map $f_\lambda:C_\lambda \longrightarrow L_\lambda\subset \mathbb{C}^n$ is  diffeomorphic. It follows that $\# F^{-1}(a)= 1$ for all points $a$ of the  open dense  algebraic subset $\mathbb{C}^n\setminus (E_{\hat F}\times \mathbb{C})$ of $\mathbb{C}^n$. Since  $F$ is locally diffeomorphic by the Jacobian condition, it follows that  $F$ is injective. Hence, by Ax-Grothendieck Theorem (Theorem 10.4.11 in \cite{Gro}, see also \cite{Ax, Kenna}), $F$ is inverse.

b) Assume  the contrary  that $C_\alpha$ is diffeomorphic to $\mathbb{C}^*$. Then,  $F$ is not inverse and the sets $E_F$ and $E_{\hat F}$ are hypersurfaces of $\mathbb{C}^n$ and $\mathbb{C}^{n-1}$, respectively.

First, we will show that  for each $\lambda \in \mathbb{C}^{n-1}\setminus E_{\hat F}$ there is a $b_\lambda\in \mathbb{C}$ such that
 $F^{-1}(\lambda,b_\lambda)=\emptyset$ and
 the map
 $$f_\lambda: C_\lambda\longrightarrow L_\lambda\setminus\{(\lambda,b_\lambda)\}\eqno(*)$$
gives a unramified covering.

\medskip
Let $\lambda \in \mathbb{C}^{n-1}\setminus E_{\hat F}$ be fixed. Let $\Gamma_1$ and $\Gamma_2$ be the two unique irreducible branches at infinity of $C_\lambda$ and $b_1$ and $b_2$ be the corresponding limiters of sequences $F_n(a_k)$ where $a_k\in \Gamma_i$ tend to infinity. Observe that at least one of $b_i$ is $\infty$. Otherwise, if both of $b_i$ are $\infty$, $f_\lambda$ must be proper, and hence, must be a diffeomorphism from  $C_\lambda$ onto $L_\lambda$. Thus, we can assume that $b_1=\infty$ and $b_2:=b_\lambda\in \mathbb{C}$.

Consider the covering  $$f_\lambda: C_\lambda\setminus F^{-1}(\lambda,b_\lambda)\longrightarrow L_\lambda\setminus\{(\lambda,b_\lambda)\}.$$
 By applying the Riemann-Huzwicz relation we have
  $$\chi(C_\lambda\setminus F^{-1}(\lambda,b_\lambda))=\deg_{geo.}f_\lambda . \chi(L_\lambda\setminus\{(\lambda,b_\lambda)\})-\sum_{p\in C_\lambda\setminus F^{-1}(\lambda,b_\lambda)} \deg_p f_\lambda-1.$$
Here, $\chi(V)$,  $\deg_{geo.}f_\lambda$ and $\deg_pf_\lambda$  denote the Euler-Poincare characteristic of an affine curve $V$, the geometric degree of $f_\lambda$ and the local degree of $f_\lambda$ at $p\in C_\lambda$, respectively.
Note that $\chi(L_\lambda\setminus\{(\lambda,b_\lambda)\}))=0$ and  $f_\lambda$ has no singularities. From the above equality it follows that
$\chi(C_\lambda\setminus F^{-1}(\lambda,b_\lambda))=0$.  Since $C_\lambda$ is diffeomorphic to $\mathbb{C}^*$,  the inverse image $F^{-1}(\lambda,b_\lambda)$ is just empty and  the covering (*)  is  unramified.
\medskip

Now,  let  $E_0:=\{a\in \mathbb{C}^n: F^{-1}(a)=\emptyset\}$ and consider the projection $\pi: E_0\longrightarrow \mathbb{C}^{n-1}$, $\pi(X):=(X_1,X_2,\dots, X_{n-1})$. Note that $E_0$ is a closed algebraic subset of $\mathbb{C}^n$. By the previous claim the restriction $\pi: \pi^{-1}(\mathbb{C}^{n-1}\setminus E_{\hat F})\longrightarrow \mathbb{C}^{n-1}\setminus E_{\hat F}$ is one-to-one. It follows that  the inverse image  $\pi^{-1}(\mathbb{C}^{n-1}\setminus E_{\hat F})$, which is an open algebraic subset of $E_0$,  is of dimension $n-1$. Therefore,   $E_0$ contains a hypersurface of $\mathbb{C}^n$. This is impossible. Indeed, if  $E_0$ contains a hypersurface defined by a nonconstant polynomial $H\in \mathbb{C}[X]$,  it must be that $H( F(X))\equiv c\neq 0$. Therefore, $DH(F(X))DF(X)\equiv 0$ that contradicts to the Jacobian condition.
\end{proof}

\medskip

\begin{center} ***\end{center}

\medskip
To conclude the article, we would like to present some remarks related to Theorem \ref{Mainthm} and Theorem \ref{Thm3}.

i) Siegel's theorem is stated and valid for  number fields.  Property (ii) in Theorem \ref{Siegelthm} is also valid for an arbitrary number field. Its proof is implicit  in the proof of the main results in  \cite{Poularkis2002} and in the algorithms finding integral points  in  \cite{Poularkis2011}.

ii) As seen in its proof, Theorem \ref{Thm3}  still holds true for  when $\mathbb{Q}$ is replaced by an arbitrary number field.

iii) In our arguments to prove Corollary \ref{Cor1} the only fact on the integral  ring $\mathbb{Z}$ of $\mathbb{Q}$ used is that any bounded subset of $\mathbb{Z}$ is finite. This is true for  integral rings of imaginary quadratic fields $\mathbb{Q}(\sqrt{-m})$, $m\in \mathbb{N}$. Thus, Theorem 1.1 is valid for the fields $\mathbb{Q}$ and $\mathbb{Q}(\sqrt{-m})$, $m\in \mathbb{N}$.  The analogous statement of this theorem for number fields would be  true if one could prove that for Keller maps $(P,Q)\in \mathbb{C}[x,y]^2$,   fibers of $P$ cannot have  components diffeomorphic to $\mathbb{C}^*$.

\bigskip

\noindent{\it Acknowledgments:} The author wishes to
express his thank to the professors Arno Van den Essen, Ha Huy Vui and  Pierrette Cassou-Nogu\`es for valuable discussions. The author would like to thank the VAST-JSPS   and the VIASM for their helps.

\end{document}